\def\RR{\mathbb{R}}
\newtheorem{thm}{Theorem}[section]
\newtheorem{rem}[thm]{Remark}
\theoremstyle{definition}
\newtheorem{lemma}[thm]{Lemma}
\numberwithin{equation}{section}
\newcommand{\R}{\mathbb{R}}
\title{Parameter identification in a structured population model}
\author{Alexander Lorz\thanks{Sorbonne Universit\'e, CNRS, Laboratoire Jacques-Louis Lions, F-75005 Paris, France
(on leave). {alexander.lorz.uni@gmail.com}} \and Jan-Frederik Pietschmann\thanks{Technische Universität Chemnitz, Fakult\"at f\"ur Mathematik, Reichenhainer Str. 41, 09126 Chemnitz Germany. jfpietschmann@math.tu-chemnitz.de} \and Matthias Schlottbom\thanks{Department of Applied Mathematics, University of Twente,
P.O. Box 217, 7500 AE Enschede, The Netherlands.
{m.schlottbom@utwente.nl}}}
\begin{document}

\maketitle
{\em Preprint: \today}
\begin{abstract}
 We study parameter identification problems in a structured population model without mutations. Given measurements of the total population size or critical points of the population, we aim to recover its growth rate, death rate or initial distribution. We present uniqueness results under suitable assumptions and present counterexamples when these assumptions are violated. Our results a supplemented by numerical studies, either based on Tikhonov regularization or the use of explicit reconstruction formulas.
\end{abstract}

\section{Introduction}
This paper is concerned with the theoretical and numerical study of several inverse problems in structured population models. These models describe the coevolution of a population where individuals have a distinct quantitative trait, such as their size. The evolution of the number of individuals with given trait $n=n(t,x)$ is assumed to be governed by two effects: Interaction among individuals and interaction with their environment. In general, interactions between individuals are due to competition (e.g. for a common food source) or by random mutations. Here we consider the case where an individuals' offspring has the same trait as its parents, thus neglecting the effect of mutations. This leads to a model of the form
\begin{align}\label{eq:model}
 \partial_t n(t,x) &= s[n]n,\quad x \in \R,\, t \in [0,T],\\
 \label{eq:init}
 n(0,x) &= n_0(x).
\end{align}
The selection rate (or selective pressure) $s[n]$ introduces coupling with respect to the $x$ variable.

The dynamics of such equations has been studied extensively by many authors, see, e.g., \cite{Desvillettes2008,JG11,LP14}. Besides existence and uniqueness of solutions, their long time behavior is analyzed. Depending on the particular form of $s$ it is expected that only a few traits survive for large times, i.e., that the solution converges to a finite sum of Dirac measures. We refer to \cite{Desvillettes2008,Lorz2011,Lorz2015} for more details. This is strongly related to the notion of evolutionary stable strategy (ESS) and we refer the reader to \cite{Maynard1973}. Also note that similar models can also be derived from a stochastic models with finite populations, cf. \cite{Champagnat2006,ChampagnatStochastic2008,DieckmannStochastic1996}.

The dynamics of \eqref{eq:model}--\eqref{eq:init} are determined by the structure of $s[n]$, and knowledge of $s[n]$ allows for prediction of the evolution of the population at future times.
In this work we are interested in identifying the model parameter $s[n]$ from observational data of the solution to \eqref{eq:model}--\eqref{eq:init} in the class of logistic type selection rates, i.e.,
\begin{align}\label{eq:selection}
 s[n] = p(x) - d(x)\rho(t).
\end{align}
Here the parameters to be identified are the reproduction rate $p$ and the trait-dependent weight function $d$ of the death rate $d\rho$,
where 
\begin{align}\label{eq:defrho}
 \rho(t) &= \int n(t,x)\;dx
\end{align}
denotes the total mass of the population at time $t$. Selection rates of form \eqref{eq:selection} are frequently used in the literature, see for example \cite{Roughgarden1979theory,Barabas2009}, yet sometimes with $\rho$ defined as a weighted integral over $n$. In our case, since $\rho(t)$ is simply the total mass, all individuals are in competition with one another, independent of their particular trait.

Typical data that we consider consist of the total population size $\rho(t)$, $0\leq t\leq T$, or of tuples $(\bar x,t)$ of the location of critical values $\bar x$ of $n(t,\cdot)$.
More precisely, we address the following inversion problems:
\begin{enumerate}
 \item[(P1)] Given measurements of $\rho(t)$ on $[0,T]$, determine either the function $p(x)$, $d(x)$ or $n_0(x)$.
 \item[(P2)] Given measurements of critical points of $n(t,\cdot)$, $t\in [0,T]$, determine either $p(x)$, $d(x)$ or $n_0(x)$.
\end{enumerate}
As will be elaborated below, there exist a number of transformations that can be applied to the parameters $p$ and $d$ yet leave the quantities $\rho$ and / or the critical points of $n$ unchanged. In these situations one cannot expect any positive identification result which is directly reflected in the assumption we have to make in our uniqueness theorems. More precisely, for (P1), we are able to give a positive identification result under suitable monotonicity assumptions on the parameters and present explicit counterexamples when these assumptions are violated. In situations when uniqueness is guaranteed, we present numerical reconstructions using Tikhonov regularization, and we verify convergence under a standard smoothness assumption. 
For (P2), we derive explicit formulas for the derivatives $p'$, $d'$ and $n_0'$, which imply uniqueness and stability with respect to perturbation of the measured data. The latter is demonstrated by numerical examples. Finally, we also comment on the simultaneous identification problem
\begin{enumerate}
 \item[(P3)] Given measurements of $\rho(t)$ as well as the position of critical points, determine both $p(x)$ and $n_0(x)$.
\end{enumerate}
In this case we cannot give a definite answer which is mainly due to the fact that it seems very delicate to combine the nonlocal information contained in $\rho(t)$ with the knowledge of critical points that is purely local. Finally, note that our setup is quite different from more common parameter identification problems for partial differential equations, see e.g. \cite{Isakov06}, since we neither have a differential operator acting in space nor measurements on the boundary.
\smallskip\\
This paper is organized as follows: In Section \ref{sec:properties}, we study the population model and show existence and uniqueness of solutions. In Section \ref{sec:inverse_population} we address (P1), give counterexamples to the identification problem for general parameters, and give classes of parameter functions for which the inverse problems in (P1) can be solved uniquely. In Section~\ref{sec:inverse_critical}, we consider (P2) and present reconstruction formulas for the derivates of the parameter functions evaluated at critical points of the population density, which is followed by a discussion regarding (P3). We present extensive numerical results for the actual reconstruction of the unknown parameters, including different ways to treat the (nonlinear) problem as well as convergence rates in Section \ref{sec:numerics}. Finally, in Section~\ref{sec:outlook}, we give an outlook for a population model with mutation.
\section{Existence of solutions}\label{sec:properties}
Equations \eqref{eq:model}--\eqref{eq:defrho} can be understood as a system of ordinary differential equations (for every point $x\in \R$) coupled via $\rho(t)$, which motivates to rewrite the solution using the following implicit representation
\begin{align}\label{eq:explicit}
 n(t,x) = n_0(x)e^{tp(x) - d(x)\int_0^t \rho(s)\;ds}.
\end{align}
Integrating expression \eqref{eq:explicit} with respect to space yields the following nonlinear fixed-point equation for the total population 
\begin{align}\label{eq:rho_fix}
	\rho(t)= \int_\RR n_0(x)e^{tp(x) - d(x)\int_0^t \rho(s)\;ds}dx,
\end{align}
which is an ordinary differential equation for $R(t)=\int_0^t\rho(s)ds$ with initial data $R(0)=0$. For convenience of the reader and for later reference, we provide a proof of uniqueness and existence of solutions to \eqref{eq:model}--\eqref{eq:defrho}. Let us refer also to \cite[Thm 2.1]{Desvillettes2008} for a similar strategy, yet in different function spaces.
\begin{thm}\label{thm:existence}
	Let $p,d \in L^\infty(\RR)$ be non-negative and let $n_0\in L^1(\RR)$ be non-negative. Then there exists a unique $n\in C^{\infty}([0,T],L^1(\RR))$ and $\rho\in C^\infty([0,T])$ solution to \eqref{eq:model}--\eqref{eq:init}.
\end{thm}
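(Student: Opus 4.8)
The plan is to recast the problem as a scalar fixed-point equation for $R(t) = \int_0^t \rho(s)\,ds$ and solve it via Picard iteration (Banach fixed point), then recover $n$ and $\rho$ and bootstrap to $C^\infty$ regularity. First I would observe that, given any continuous candidate $R$, formula \eqref{eq:explicit} defines a pointwise-in-$x$ density $n(t,x) = n_0(x)e^{tp(x) - d(x)R(t)}$, and integrating in $x$ gives a map
\begin{align}\label{eq:fixedpointmap}
 (\Phi R)(t) = \int_0^t \int_\RR n_0(x)e^{sp(x) - d(x)R(s)}\,dx\,ds.
\end{align}
A fixed point of $\Phi$ is exactly a solution of the integrated version of \eqref{eq:rho_fix}. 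I would work in $C([0,T])$ (or a closed ball therein) with the sup-norm, possibly after introducing an exponential weight $e^{-\lambda t}$ to make $\Phi$ a contraction on the whole interval at once rather than only on a short time interval.

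The key steps, in order, are as follows. First, establish a priori bounds: since $p,d,n_0$ are non-negative and $R$ is non-decreasing with $R\geq 0$, the exponent satisfies $tp(x) - d(x)R(t) \leq T\norm{p}_{L^\infty}$, so $0 \leq n(t,x) \leq n_0(x)e^{T\norm{p}_{L^\infty}}$, which is integrable; hence $\rho(t) \leq \norm{n_0}_{L^1}e^{T\norm{p}_{L^\infty}} =: M$ and $R$ stays in a bounded, invariant set. Second, prove the contraction estimate: for two candidates $R_1,R_2$, the elementary bound $|e^{-a} - e^{-b}| \leq |a-b|$ for $a,b\geq 0$ gives $|e^{-d(x)R_1(s)} - e^{-d(x)R_2(s)}| \leq d(x)|R_1(s) - R_2(s)| \leq \norm{d}_{L^\infty}\norm{R_1 - R_2}_\infty$; multiplying by $n_0(x)e^{sp(x)}$ and integrating in $x$ and then in $s$ yields a Lipschitz constant that is controlled and, under the weighted norm, strictly less than one. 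Third, Banach's fixed point theorem delivers a unique $R \in C([0,T])$, from which $\rho = R'$, $n$ via \eqref{eq:explicit}, and uniqueness for the original system all follow. Fourth, for regularity I would bootstrap: the fixed point equation shows $R$ is differentiable with $R' = \rho$ given by the integral in \eqref{eq:rho_fix}; since the integrand depends smoothly on $t$ through the factor $e^{tp(x) - d(x)R(t)}$ and dominated convergence justifies differentiation under the integral sign, each derivative of $\rho$ is again such an integral, giving $\rho \in C^\infty([0,T])$, and correspondingly $n \in C^\infty([0,T],L^1(\RR))$ by differentiating \eqref{eq:explicit} and dominating the resulting integrands.

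The main obstacle I anticipate is the interchange of differentiation and integration needed for the $C^\infty$ conclusion, and more precisely justifying the requisite domination uniformly in $t$. Each $t$-derivative of $e^{tp(x) - d(x)R(t)}$ brings down factors of $p(x)$ and $d(x)R'(t)$, and for the $k$-th derivative one obtains a polynomial in $p(x)$ and in the derivatives $R',\dots,R^{(k)}$ multiplying the exponential; one must check inductively that these are dominated by an $L^1(\RR)$ function uniformly on $[0,T]$ so that differentiation under the integral and continuity of the resulting derivative are legitimate. Because $p \in L^\infty$ the polynomial factors in $p(x)$ are uniformly bounded, and the time-derivatives of $R$ are continuous on the compact $[0,T]$ hence bounded, so the dominating function is always a bounded multiple of $n_0(x)e^{T\norm{p}_{L^\infty}} \in L^1(\RR)$; the induction then closes. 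The contraction step is routine once the a priori bound confines $R$ to the invariant set, so the regularity bootstrap is where the care is required.
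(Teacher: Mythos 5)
Your proposal is correct and follows essentially the same route as the paper: a Banach fixed-point argument with an exponentially weighted sup-norm, the elementary estimate $|e^{-a}-e^{-b}|\leq |a-b|$ for $a,b\geq 0$ to get the contraction, and an inductive bootstrap for the $C^\infty$ regularity. The only cosmetic difference is that you set up the fixed point for the antiderivative $R(t)=\int_0^t\rho(s)\,ds$ in $C([0,T])$, while the paper iterates directly on $\rho$ in $L^\infty(0,T)$ (itself noting that \eqref{eq:rho_fix} is an ODE for $R$), which changes nothing of substance.
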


\begin{proof}
The proof relies on Banach's fixed point theorem. For 
$$
M=\{\rho\in L^\infty(0,T):\rho\geq 0\}
$$
define the map $\Lambda:M\to M$ as
\begin{align}\label{eq:def_lambda}
	(\Lambda(\rho))(t)=\int_\R n_0(x) e^{tp(x)-d(x)\int_0^t\rho(s)ds}dx.
\end{align}
By construction, fixed points of $\Lambda$ are solutions to \eqref{eq:rho_fix}. We endow the space $L^\infty(0,T)$ with the norm
\begin{align*}
	\|\rho\|_{\infty,a}=\sup_{0<t<T} |\rho(t)|e^{-at}
\end{align*}
and chose $a=2\|n_0\|_{L^1} \|d\|_\infty e^{T\|p\|_\infty}$. We have $a=0$ when either $n_0\equiv 0$ or $d\equiv0$, and the assertion holds trivially. Let now $a>0$. Obviously, $\Lambda$ is a self-mapping. In order to show that $\Lambda$ is a contraction, we observe that
\begin{align*}
	|e^{-d z}-e^{-dz_0}|\leq d |z-z_0|
\end{align*}
for all $z_0,z\geq 0$. Hence, we obtain for $\rho_1,\rho_2\in M$
\begin{align*}
	|\Lambda(\rho_1)-\Lambda(\rho_2)|(t)&\leq \int_\R n_0(x) e^{tp(x)} |e^{-d(x)\int_0^t\rho_1(s)ds}-e^{-d(x)\int_0^t\rho_2(s)ds}|dx\\
	&\leq \|n_0\|_{L^1} e^{T\|p\|_\infty} \|d\|_\infty\int_0^t|\rho_1(s)-\rho_2(s)|ds\\
	&\leq\|n_0\|_{L^1} e^{T\|p\|_\infty} \|d\|_\infty\|\rho_1-\rho_2\|_{\infty,a} \frac{e^{at}}{a}.
\end{align*}
By the choice of $a$, we thus obtain 
\begin{align*}
	\|\Lambda(\rho_1)-\Lambda(\rho_2)\|_{\infty,a}\leq \frac{1}{2}\|\rho_1-\rho_2\|_{\infty,a},
\end{align*}
which shows that $\Lambda$ is a contraction. Banach's fixed point theorem implies the existence and uniqueness of $\rho\in M$ such that $\rho=\Lambda(\rho)$. Defining $n(t,x)$ via \eqref{eq:explicit} yields the unique solution to \eqref{eq:model}--\eqref{eq:init}. In addition, since $t\mapsto \int_0^t \rho(s)ds \in W^{1,\infty}(0,T)$, we infer that $n(t,x)\in W^{1,\infty}(0,T)$ a.e. $x$. The regularity assumptions on $p$, $d$ and $n_0$ yield that $n\in W^{1,\infty}(0,T;L^1(\RR))$. Using \eqref{eq:defrho}, we then obtain $\rho\in W^{1,\infty}(0,T)$. Repeating these arguments, we obtain higher order differentiability in time of $\rho$ and $n$.
\end{proof}
%
%
\section{Identification from knowledge of the total population size}\label{sec:inverse_population}
In the following we address inverse problem (P1).
In general, the coefficient $p$ is not uniquely determined given measurements of the total population $\rho$ as shown by the following examples.
\begin{itemize}
 \item[(i)] Translational invariance: Let $n_0(x) = 1$ for $x\in \R$, $d=0$ and let $c>0$ be arbitrary. In addition, choose a compactly supported function $p(x)$ and define the function $\bar p(x) := p(x+c)$. Solving \eqref{eq:model}--\eqref{eq:init} with parameters $p$ and $\bar p$, respectively, yields the same function $\rho(t)$. 
 \item[(ii)] Symmetry: Let $d(x)=d(-x)$, $n_0(x)=n_0(-x)$, and let $p_1(x)$ be arbitrary. If we define $p_2(x)=p_1(-x)$, then $n_2(x,t)=n_1(-x,t)$, and  $\rho_1(t)=\rho_2(t)$ for $t\geq 0$.
\end{itemize}
These examples suggest to consider the class of strictly monotone coefficient functions $p$.
\begin{thm}\label{thm:ident_p}
 Let $n_0\in C^0(\RR)$ be nonnegative with compact and connected support. Assume that $d(x)=d>0$ is constant.
 Denote by $p_1$ and $p_2$ continuous and strictly monotone functions on the support of $n_0$ such that $p_1'p_2'>0$, and let $n_1$ and $n_2$ denote the solutions to \eqref{eq:model}--\eqref{eq:init} with $p$ replaced by $p_1$ and $p_2$, respectively. Then, with $\rho_1$ and $\rho_2$ being the respective population sizes we have
\begin{align*}
 \rho_1 = \rho_2\text{ on } [0,T]\text{ implies } p_1 = p_2\text{ on } \mathrm{supp}(n_0).
\end{align*}
\end{thm}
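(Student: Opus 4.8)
The plan is to exploit that, since $d$ is constant, the nonlocal factor involving $\rho$ decouples from the spatial integral. Writing $R_i(t)=\int_0^t\rho_i(s)\,ds$, the representation \eqref{eq:explicit} together with \eqref{eq:defrho} gives
\[
\rho_i(t)=e^{-dR_i(t)}\int_\R n_0(x)e^{tp_i(x)}\,dx,\qquad i=1,2.
\]
If $\rho_1=\rho_2$ on $[0,T]$ then $R_1=R_2$, so the strictly positive prefactors $e^{-dR_i(t)}$ coincide and can be cancelled, leaving
\[
F_1(t):=\int_\R n_0(x)e^{tp_1(x)}\,dx=\int_\R n_0(x)e^{tp_2(x)}\,dx=:F_2(t),\qquad t\in[0,T].
\]
(If $n_0\equiv0$ the support is empty and there is nothing to prove, so assume $n_0\not\equiv0$.)

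First I would upgrade this identity from the interval $[0,T]$ to all of $\R$. Because $n_0$ has compact support and $p_i\in L^\infty$, the integrands are dominated locally uniformly in a complex variable $t$, so each $F_i$ extends to an entire function; by the identity theorem $F_1=F_2$ throughout $\C$. Differentiating under the integral at $t=0$ then shows that all moments agree, $\int_\R p_1(x)^k n_0(x)\,dx=\int_\R p_2(x)^k n_0(x)\,dx$ for every $k\ge 0$. Introducing the pushforward measures $\nu_i=(p_i)_*(n_0\,dx)$, which are compactly supported since $p_i$ is bounded on $K:=\mathrm{supp}(n_0)$, this reads $\int y^k\,d\nu_1(y)=\int y^k\,d\nu_2(y)$. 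As a compactly supported measure is determined by its moments, I conclude $\nu_1=\nu_2$.

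The crux, and the step I expect to be the main obstacle, is to recover $p_1=p_2$ from the equality of these pushforward measures; this is exactly where the hypotheses enter. Assume both $p_i$ are strictly increasing, the condition $p_1'p_2'>0$ guaranteeing they share the same direction and the decreasing case being symmetric. Since $K$ is connected it is an interval $[a,b]$, and because $\mathrm{supp}(n_0)=[a,b]$ the continuous density $n_0$ cannot vanish on any subinterval, so $M(x):=\int_a^x n_0(\xi)\,d\xi$ is \emph{strictly} increasing on $[a,b]$, hence injective. Using monotonicity of $p_i$ one computes the distribution function $\nu_i((-\infty,y])=M\big(p_i^{-1}(y)\big)$ for $y$ in the range of $p_i$. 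Comparing the (common) supports of $\nu_1=\nu_2$ forces $p_1(a)=p_2(a)$ and $p_1(b)=p_2(b)$, so the ranges agree; then $M\circ p_1^{-1}=M\circ p_2^{-1}$ combined with injectivity of $M$ yields $p_1^{-1}=p_2^{-1}$ on that range, and inverting gives $p_1=p_2$ on $[a,b]=\mathrm{supp}(n_0)$.

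The points requiring care are the justification of the analytic extension and of moment determinacy (both routine once compact support is used), and above all the verification that connectedness of the support genuinely excludes flat pieces of $M$, which is what makes the reconstruction from $\nu_1=\nu_2$ unique. It is also worth noting that the sign hypothesis $p_1'p_2'>0$ is precisely what rules out the orientation-reversing ambiguity behind the symmetry counterexample (ii): with opposite monotonicities one would only obtain $M\circ p_1^{-1}=\|n_0\|_{L^1}-M\circ p_2^{-1}$, which does not force $p_1=p_2$.
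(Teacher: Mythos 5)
Your proof is correct, and it follows the paper's strategy in its first half: you cancel the common factor $e^{-d\int_0^t\rho(s)ds}$ (possible precisely because $d$ is constant), reduce to equality of the transforms $\int n_0 e^{tp_i}\,dx$ on $[0,T]$, and pass to moments at $t=0$ — exactly the paper's equations \eqref{eq:identity}--\eqref{eq:N0dx}. (Your analytic-continuation step is sound but superfluous: the identity already holds on a right neighborhood of $t=0$, so one-sided differentiation at $t=0$, which is what the paper does, suffices.) Where you genuinely diverge is the second half. The paper changes variables $y=p_i(x)$, producing the densities $n_0(p_i^{-1}(y))/p_i'(p_i^{-1}(y))$, shows the continuous bracket vanishes by a Weierstrass density argument, deduces $\mathcal{P}_1=\mathcal{P}_2$, and then integrates the identity for $\frac{d}{dy}\bigl(N_0(p_1^{-1}(y))-N_0(p_2^{-1}(y))\bigr)$ using matched left endpoints; this implicitly requires $p_i\in C^1$ with nonvanishing derivative (presupposed by the hypothesis $p_1'p_2'>0$, even though the statement otherwise asks only for continuity). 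Your route through the pushforward measures $\nu_i=(p_i)_*(n_0\,dx)$ avoids differentiability entirely: moment determinacy of compactly supported measures (the same Weierstrass mechanism, applied to the signed measure $\nu_1-\nu_2$) gives $\nu_1=\nu_2$; equality of the supports $[p_i(a),p_i(b)]$ replaces the paper's $\mathcal{P}_1=\mathcal{P}_2$ step and fixes the endpoint normalization $p_1(a)=p_2(a)$; and the key identity $M\circ p_1^{-1}=M\circ p_2^{-1}$ is read off directly from the distribution functions rather than obtained by integrating a derivative. Your justification that connectedness of $\mathrm{supp}(n_0)$ forbids $n_0$ from vanishing on any open subinterval, so that $M$ is \emph{strictly} increasing, is exactly right — and slightly more careful than the paper's phrase ``$n_0$ is positive in the interior of $\mathcal{S}$'', which continuity plus connected support does not literally guarantee at isolated points (though the paper's final integral argument is unaffected by such points). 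Net effect: your argument establishes the theorem for $p_1,p_2$ merely continuous, strictly monotone, and of the same orientation, at the price of measure-theoretic language, whereas the paper's computation is more explicit and yields the intermediate pointwise identity \eqref{eq:diff_N}; your closing remark correctly identifies the role of the same-orientation hypothesis in ruling out the reflection ambiguity of counterexample (ii).
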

\begin{proof}
 By assumption $\rho=\rho_1=\rho_2$, and it follows from \eqref{eq:defrho} that
\begin{align}\label{eq:identity}
 \int_{\RR} n_0 e^{tp_1} e^{-d\int_0^t \rho(s)ds}\;dx = \int_\RR n_0 e^{tp_2} e^{-d\int_0^t \rho(s)ds}\;dx.
\end{align} 
Since $d$ is constant, this implies
\begin{align*}
 \int_{\RR} n_0 e^{tp_1} \;dx = \int_\RR n_0 e^{tp_2} \;dx.
\end{align*}
Using monotonicity of $p_1$ and $p_2$, we can transform each of the integrals, using either $y=p_1(x)$ or $y=p_2(x)$ as new variables, respectively, to obtain
\begin{align*}
 \int_{\RR} \left( \frac{n_0(p^{-1}_1(y))}{p_1'(p_1^{-1}(y))}\chi_{\mathcal{P}_1}(y) - \frac{n_0(p^{-1}_2(y))}{p_2'(p_2^{-1}(y))}\chi_{\mathcal{P}_2}(y)\right) e^{ty}\;dy = 0,
\end{align*}
where we also used that $p_1'p_2'>0$. Here, $\mathcal{P}_i=p_i(\mathcal{S})$ for $\mathcal{S}={\rm supp}(n_0)$, $i=1,2$, and $\chi_{\mathcal{P}_i}$ denotes the indicator function of the set $\mathcal{P}_i$. Since $\mathcal{S}$ is a compact interval and $p_i \in C^0(\mathcal{S})$, $\mathcal{P}_i$ are compact intervals.
Differentiation with respect to $t$ and evaluating the result for $t=0$ then yields, for every $k\ge 0$,
\begin{align}\label{eq:N0dx}
 \int_{\mathcal{P}_1\cup \mathcal{P}_2} \left( \frac{n_0(p^{-1}_1(y))}{p_1'(p_1^{-1}(y))}\chi_{\mathcal{P}_1}(y) - \frac{n_0(p^{-1}_2(y))}{p_2'(p_2^{-1}(y))}\chi_{\mathcal{P}_2}(y)\right) y^k\;dx = 0.
\end{align}
The term in brackets is continuous as a function of $y$ due to the construction of $\mathcal{P}_i$, $i=1,2$.
Since $\mathcal{P}_1\cup \mathcal{P}_2$ is compact, a density argument yields
\begin{align}\label{eq:diff_N}
 \frac{n_0(p^{-1}_1(y))}{p_1'(p_1^{-1}(y))}\chi_{\mathcal{P}_1}(y)  - \frac{n_0(p^{-1}_2(y))}{p_2'(p_2^{-1}(y))}\chi_{\mathcal{P}_2}(y) =0
\end{align}
for all $y\in \mathcal{P}_1\cup \mathcal{P}_2$. This readily implies ${\mathcal{P}_1}\setminus {\mathcal{P}_2}=\emptyset$ and ${\mathcal{P}_2}\setminus {\mathcal{P}_1}=\emptyset$, and hence $\mathcal{P}_1\cup \mathcal{P}_2=\mathcal{P}_1\cap \mathcal{P}_2$, i.e., $\mathcal{P}_1= \mathcal{P}_2$. 
Introducing the primitive of $n_0$, i.e., 
\begin{align*}
	N_0(x)=\int_{x_0}^x n_0(z) dz,
\end{align*}
where $x_0 = \min\mathcal{S}$, we see that \eqref{eq:diff_N} is equivalent to
\begin{align*}
  \frac{d}{dy}\left(N_0(p_1^{-1}(y)) - N_0(p_2^{-1}(y))\right) = 0
\end{align*}
for all $y\in \mathcal{P}:=\mathcal{P}_1=\mathcal{P}_2$. The assumption $p_1'p_2'>0$ then implies $p_1(x_0)=p_2(x_0)$, and hence
$N_0(p_1^{-1}(y)) = N_0(p_2^{-1}(y))$ for all  $y\in\mathcal{P}$.
Using the definition of $N_0$ we thus obtain
\begin{align*}
	\int_{p_2^{-1}(y)}^{p_1^{-1}(y)}n_0(z)dz=0
\end{align*}
for $y\in \mathcal{P}$. Since, $p_i^{-1}(\mathcal{P})=\mathcal{S}$, $i=1,2$, and $n_0$ is positive in the interior of $\mathcal{S}$, we deduce that $p_2^{-1}(y)=p_1^{-1}(y)$ for all $y\in\mathcal{P}$, i.e., $p_1(x)=p_2(x)$ for all $x\in\mathcal{S}$.
\end{proof}
\begin{rem}[Identification of $d$ and $n_0$]
Interchanging the roles of $d$ and $p$ in the above examples shows that, in general, uniqueness of $d$ cannot be expected from knowledge of $\rho$ only.
With similar arguments as in the proof of Theorem~\ref{thm:ident_p}, one can, however, prove uniqueness of $d$ in the class of strictly monotone functions (either increasing or decreasing) given measurements of $\rho(t)$ and knowledge of $n_0$ and constant $p$.
Moreover, one can show that for $p$ and $n_0$ arbitrary, knowledge of $\rho(t)$, $t\geq 0$, uniquely determines constant parameters $d$.
The transformation $y=p(x)$ in the proof of Theorem~\ref{thm:ident_p} can also be used to identify compactly supported initial data $n_0$ if $p$ is strictly monotone and $d$ is constant. We leave the details to the reader.
\end{rem}

\section{Identification in critical points of the population}\label{sec:inverse_critical}
Above we have shown that, under appropriate assumptions, the total population size contains sufficient information for the determination of some of the parameters of the problem. These results, however, do not provide an explicit reconstruction formula. In this section, we show that knowledge of the critical points of the population density can be used to directly compute derivatives of the unknown parameters.\\
Before we state the results, we discuss properties of the critical points of $n$ in some detail.
\subsection{The critical points of $n$}\label{sec:critical}
We call a point $\bar x\in {\rm supp}(n_0)$ critical for $n$ if there exists a $t\geq 0$ such that $\partial_x n(t,\bar x)=0$.
\begin{lemma}\label{lem:formula_diff_log}
  	Denote by $n$ the solution to \eqref{eq:model}--\eqref{eq:init} for  differentiable parameter functions $d$ and $p$.
	Then, any critical point $\bar x$ of $n$ is characterized by
		\begin{align}\label{eq:rec_from_max}
			 (\ln (n_0(x)))'_{\mid x=\bar x} =  d'(\bar x) \int_0^t\rho(s)ds-t p'(\bar x) .
		\end{align}
\end{lemma}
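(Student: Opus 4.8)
The plan is to work directly from the explicit representation \eqref{eq:explicit} of the solution, since it already exhibits the $x$-dependence of $n$ in closed form. First I would record the structural observation that the exponential factor $e^{tp(x)-d(x)\int_0^t\rho(s)\,ds}$ in \eqref{eq:explicit} is strictly positive for every $(t,x)$. Consequently, at a critical point $\bar x\in{\rm supp}(n_0)$ where $n_0(\bar x)>0$ — which is exactly the regime in which the logarithmic derivative $(\ln n_0)'(\bar x)$ appearing in \eqref{eq:rec_from_max} is meaningful — the solution $n(t,\cdot)$ is itself strictly positive in a neighborhood of $\bar x$. This positivity is the property that permits both the passage to logarithms and the reduction of the critical-point condition to a condition on $\partial_x\ln n$.

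Next I would take the logarithm of \eqref{eq:explicit}, obtaining
\[
\ln n(t,x) = \ln n_0(x) + t\,p(x) - d(x)\int_0^t\rho(s)\,ds,
\]
which is legitimate and differentiable in $x$ near $\bar x$ by the assumed differentiability of $n_0$, $p$, and $d$, the remaining factor $\int_0^t\rho(s)\,ds$ depending only on $t$. Differentiating in $x$ then yields
\[
\partial_x\ln n(t,x) = (\ln n_0(x))' + t\,p'(x) - d'(x)\int_0^t\rho(s)\,ds.
\]
Since $n>0$ near $\bar x$, the identity $\partial_x n = n\,\partial_x\ln n$ shows that $\partial_x n(t,\bar x)=0$ holds \emph{if and only if} $\partial_x\ln n(t,\bar x)=0$. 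Evaluating the displayed derivative at $x=\bar x$, setting it equal to zero, and rearranging produces precisely \eqref{eq:rec_from_max}; because the argument runs through an equivalence, it delivers the ``characterization'' in both directions for the time $t$ at which $\bar x$ is critical.

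The computation itself is elementary, so the only point that genuinely requires care is the well-posedness underlying the logarithmic manipulation: one must ensure $n_0(\bar x)>0$ so that $(\ln n_0)'(\bar x)$ is defined, and so that the condition $\partial_x n=0$ reflects a true critical point rather than an artifact of $n$ vanishing. Under the standing reading that $\bar x$ lies where $n_0$ is positive (e.g.\ in the interior of a connected support with $n_0$ continuous and positive there), this is immediate, and it is the main — indeed essentially the only — obstacle to a fully rigorous statement of the lemma.
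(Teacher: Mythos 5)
Your proof is correct and follows essentially the same route as the paper's: take the logarithm of the explicit representation \eqref{eq:explicit}, differentiate in $x$, and use the equivalence $\partial_x n(t,\bar x)=0 \iff \partial_x \ln n(t,\bar x)=0$ (via the chain rule and positivity of $n$) to arrive at \eqref{eq:rec_from_max}. Your explicit attention to the requirement $n_0(\bar x)>0$, which makes the logarithmic manipulation legitimate, is a point the paper leaves implicit, but it does not constitute a different argument.
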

\begin{proof}
	Using the chain rule, we see that $\bar x$ is also a critical point of $\ln n$, i.e.,
	\begin{align*}
		\partial_x (\ln(n(t,x)))_{\mid x=\bar x}=0.
	\end{align*}
	On the other hand, from the solution formula \eqref{eq:explicit}, we deduce that 
	\begin{align*}
		\ln(n(t,x)) = \ln (n_0(x)) + t p(x) - d(x) \int_0^t\rho(s)ds,
	\end{align*}
	so we obtain the result by differentiation with respect to $x$ and evaluation at $x=\bar x$.
\end{proof}
Assuming that $d$ is constant, the critical points of $n$ are, therefore, those $\bar x\in {\rm supp}(n_0)$ for which $t\geq 0$ exists with
\begin{align}\label{eq:cond_crit}
	\frac{n_0'(\bar x)}{n_0(\bar x)}+tp'(\bar x)=0.
\end{align}
We distinguish three cases:
\begin{itemize}
	\item[(i)] For $n_0'(x)p'(x)>0$, the point $x$ is never a critical for $n(t,\cdot)$.
	\item[(ii)] For $n_0'(x)p'(x)<0$, there exists a unique $t=-n_0'(x)/(n_0(x)p'(x))$ for which $x$ is a critical point of $n(t,\cdot)$.
	\item[(iii)] For $n_0'(x)p'(x)=0$, if $p'(x)=0$, then \eqref{eq:cond_crit} implies $n_0'(x)=0$, and $x$ is a critical point of $n(t,\cdot)$ for all $t\geq 0$. Otherwise, if $p'(x)\neq 0$, then $x$ is critical point for $n(t,\cdot)$ only for $t=0$.
\end{itemize}
A similar discussion applies for $p$ constant and $d$ variable; or $n_0$ constant and $p$ and $d$ variable.
\subsection{Identification of a single parameter}
As a direct consequence of Lemma~\ref{lem:formula_diff_log} we obtain the following reconstruction formulas for the derivatives of the parameters.
\begin{thm}\label{thm:recon_p_d_prime}
	  Let $T>0$, and denote by $n$ the solution to \eqref{eq:model}--\eqref{eq:init} for differentiable parameter functions $d$ and $p$ and differentiable initial datum. Furthermore, let $\bar x$ be a critical point of $n(\cdot,t)$ for some $t>0$.
	
	(i) If $d$ is constant, then $p'(\bar x)$ is uniquely determined by $n_0$, i.e., 
		\begin{align}\label{eq:cond_max_p}
			 p'(\bar x)=-\frac{n_0'(\bar x)}{t n_0(\bar x)}.
		\end{align}
	(ii) If $p$ is constant, then $d'(\bar x)$ is uniquely determined by $n_0$ and $\int_0^t\rho(s) ds$, i.e.,
	\begin{align}\label{eq:cond_max_d}
		 d'(\bar x)=\frac{n_0'(\bar x)}{ n_0(\bar x)\int_0^t \rho(s)ds}.
	\end{align}
	(iii) $(\ln(n_0(x)))'_{\mid x=\bar x}$ is uniquely determined by $p'(\bar x)$, $d'(\bar x)$ and $\int_0^t\rho(s)ds$ by \eqref{eq:rec_from_max}.
\end{thm}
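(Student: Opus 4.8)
The plan is to obtain all three claims by algebraic rearrangement of the single identity furnished by Lemma~\ref{lem:formula_diff_log}; once that lemma is available there is no further analytic content, only the bookkeeping of checking that the quantity one divides by does not vanish. Writing $(\ln(n_0(x)))' = n_0'(x)/n_0(x)$, the characterization \eqref{eq:rec_from_max} of a critical point $\bar x$ reads
\begin{align*}
  \frac{n_0'(\bar x)}{n_0(\bar x)} = d'(\bar x)\int_0^t \rho(s)\,ds - t\,p'(\bar x).
\end{align*}
Each of (i)--(iii) then follows by specializing one parameter and solving for the remaining derivative.

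For part (i) I would substitute $d'\equiv 0$, which is legitimate because $d$ is constant. The identity collapses to $n_0'(\bar x)/n_0(\bar x) = -t\,p'(\bar x)$, and dividing by $t$ gives \eqref{eq:cond_max_p}. The hypothesis $t>0$ is precisely what makes this division valid, while $n_0(\bar x)>0$ is already implicit in the differentiability of $\ln n_0$ at $\bar x$ assumed in Lemma~\ref{lem:formula_diff_log}.

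For part (ii) I would symmetrically set $p'\equiv 0$, leaving $n_0'(\bar x)/n_0(\bar x) = d'(\bar x)\int_0^t\rho(s)\,ds$, and divide by $\int_0^t\rho(s)\,ds$ to reach \eqref{eq:cond_max_d}. This is the one place where a genuine, if mild, point must be verified, namely the nonvanishing of $\int_0^t\rho(s)\,ds$. I would argue this from the positivity of $\rho$: by the representation \eqref{eq:explicit} the density is $n(s,x)=n_0(x)e^{sp(x)-d(x)\int_0^s\rho}$, which is nonnegative, and since $n_0$ is nonnegative and nontrivial while the exponential factor is strictly positive, we get $\rho(s)=\int n(s,x)\,dx>0$ for every $s$, hence $\int_0^t\rho(s)\,ds>0$ for $t>0$.

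For part (iii) no inversion is required at all: identity \eqref{eq:rec_from_max} already expresses $(\ln(n_0(x)))'_{\mid x=\bar x}$ explicitly through $p'(\bar x)$, $d'(\bar x)$ and $\int_0^t\rho(s)\,ds$, so the asserted unique determination is immediate. The upshot is that the theorem carries no hard step of its own; its entire substance is inherited from Lemma~\ref{lem:formula_diff_log}, and the only care needed is to confirm $t>0$ and $\int_0^t\rho(s)\,ds>0$ before dividing.
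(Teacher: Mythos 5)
Your proposal is correct and follows the paper's argument exactly: the paper presents Theorem~\ref{thm:recon_p_d_prime} as a direct consequence of Lemma~\ref{lem:formula_diff_log}, obtained by setting $d'(\bar x)=0$ or $p'(\bar x)=0$ in \eqref{eq:rec_from_max} and solving for the remaining derivative. Your additional verification that $\int_0^t\rho(s)\,ds>0$ (via positivity of $n$ from the representation \eqref{eq:explicit}) is a sound, if implicit in the paper, justification for the division in part (ii).
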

\begin{rem}
	It can be easily seen from the solution formula \eqref{eq:explicit} that the functions $n(t,x)$ and $n_c(x,t)$, which are solutions to \eqref{eq:model}--\eqref{eq:defrho} for parameters $(p,d)$ and $(p+c,d)$ with constants $c,d\in\RR$, respectively, share the same critical points. In this sense, the previous theorem cannot be improved without further assumptions. A similar conclusion holds true for parameter pairs $(p,d)$ and $(p,d+c)$.
\end{rem}
\begin{rem}
	 In the situation of Theorem~\ref{thm:recon_p_d_prime},
	if the closure of the set of critical points coincides with the support of $n_0$, then $p$ is determined up to an additive constant. If in addition $\rho(t)$ is known for some $t>0$, then this additive constant is fixed, i.e., $p$ is unique.
\end{rem}
\subsection{Remarks on simultaneous identification}
Simultaneous identification of multiple parameters or their derivatives is difficult.
Counting dimensions, it is to be expected that measurements of the one dimensional function $\rho(t)$ is not sufficient to simultaneously recover two the parameter functions, which is supported by the following examples
\begin{itemize}
	\item[(i)] Let $n_0$ be any compactly supported function with $\int n_0 dx=a>0$. Let $d_1(x)$ and $d_2(x)$ be arbitrary functions, and define $p_i(x)=a d_i(x)$, $i=1,2$.  Then $n_i(x,t)=n_0(x)$ solves \eqref{eq:model}--\eqref{eq:init} with $\rho_i(t)=\rho(0)=a$. Hence, knowledge of $\rho$ does not allow to identify $p$ and $d$ simultaneously.
\item[(ii)] Let $n_0$ be any function supported on $[0,1]$, $d=0$, and let $p_i:[0,1]\to[0,1]$, $i=1,2$, be two invertible functions that satisfy $p_i(0)=0$ and $p_i(1)=1$. We define the initial datum as $n_0^i(x)=n_0(p_i(x))p'_i(x)$, and denote $n_i$ the corresponding solutions to \eqref{eq:model}--\eqref{eq:init}. Using the substitution $y=p_i(x)$, we obtain that
\begin{align*}
	\rho_i(t)&=\int_0^1 n_0^i(x) e^{p_i(x)t}dx=\int_0^1 n_0(y) e^{yt}dy,
\end{align*}
	i.e., $\rho_1(t)=\rho_2(t)$.
	Hence, it is not possible to determine $n_0$ and $p$ from $\rho$. This argument can be extended to $d>0$.
\end{itemize}
In Section~\ref{sec:inverse_critical}, we have seen that measuring the critical points allows for reconstruction of derivatives of one of the parameters.
The discussion in Section~\ref{sec:critical} shows that if $x$ is a critical point of $n$ for two distinct times, say $t_1, t_2\geq 0$, then $n_0'(x)=0$ and $p'(x)=0$ are uniquely determined given that $d\in\RR$ is constant. Similarly, $n_0'(x)=0$ and $d'(x)=0$ if $p\in\RR$. Using \eqref{eq:rec_from_max} this reasoning can be extended to non-constant $p$ and $d$, and to obtain formulas for $d'(x)$ and $p'(x)$ given $n_0'(x)/n_0(x)$ and $\rho(t)$, which is
\begin{align*}
	\begin{pmatrix} \int_0^{t_1}\rho(s)ds & -t_1\\\int_0^{t_2}\rho(s)ds & -t_2\end{pmatrix}\begin{pmatrix} d'(\bar x)\\ p'(\bar x)\end{pmatrix}=\frac{n_0'(\bar x)}{n_0(\bar x)}\begin{pmatrix}1\\1\end{pmatrix}.
\end{align*}
We note that, in general, the matrix in the above linear system might be singular, thereby allowing for multiple solutions or none.
We note that identifying two of the parameter functions from knowledge of $\rho$ and $x(t)$, where $x(t)$ denotes a curve of critical points, with $x'(t)\neq 0$ remains an open problem.
\section{Reconstructions}\label{sec:numerics}

\subsection{Reconstructions from the total population size}
In this section we assume knowledge of the total population size $\{\rho(t):0\leq t\leq T\}$ in order to determine the parameter function $p(x)$.
Theorem~\ref{thm:ident_p} shows that measuring the total population size is sufficient in order to uniquely reconstruct the parameter $p$ as long as $d$ is a constant and $p$ is either strictly increasing or strictly decreasing.
Contrary to the situation of Theorem~\ref{thm:recon_p_d_prime}, there are, however, no explicit reconstruction formulas available.
We thus propose to use a variational regularization technique to numerically reconstruct $p$ from measurements of the (noisy) total population size $\rho^\delta(t)$, where $\delta > 0$ denotes the noise level. 
In the following two subsections we discuss two approaches to define suitable Tikhonov regularizations in Hilbert spaces.

\subsubsection{Fully nonlinear forward operator}
We begin with the obvious definition of the nonlinear forward operator
\begin{align*}
	F: X=H^1(\mathcal{S})\to Y=L^2(0,T),\quad p\mapsto \rho\quad\text{where}\quad \rho=\Lambda_p(\rho).
\end{align*}
Here, the subscript $p$ should emphasize the dependence on $p$ of the map $\Lambda$ as defined in \eqref{eq:def_lambda}. The choice of $X=H^1(\mathcal{S})$ is motivated by the continuity of the embedding $H^1(\mathcal{S})\hookrightarrow L^\infty(\mathcal{S})$, which implies that $F$ is well-defined by Theorem~\ref{thm:existence}. 
Denoting by $p_0\in H^1(\mathcal{S})$ some a-priori knowledge, such as a monotonically increasing function, we construct stable approximations to the exact solution $p^\dagger$, which satisfies $F(p^\dagger)=\rho$, by minimizing the Tikhonov functional
\begin{align}\label{eq:Tik}
	\frac{1}{2} \|F(p)-\rho^\delta\|_{Y}^2 + \frac{\alpha}{2}\|p-p_0\|_{X}^2,
\end{align}
over the space $H^1(\mathcal{S})$. Here and in the following we make the assumption that the data perturbation can be estimated as follows
\begin{align}
	\|\rho-\rho^\delta\|_{L^2(0,T)}\leq \delta.
\end{align}
Standard theory of inverse problems can be used to prove existence of minizimers $p_\alpha^\delta$ and stable dependence on the data as long as $\alpha>0$, see e.g. \cite{EHN96}. 
Widely used algorithms to minimize the Tikhonov functional employ the gradient of $F$.
Without proof (which amounts to a lengthy calculation using \eqref{eq:explicit}), we note that $F$ depends smoothly on $p$ and the Fr\'echet derivative is
\begin{align*}
	F'(p):h\mapsto D\quad\text{where}\quad D(t)=\int_\RR \left[th(x)-d(x)\int_0^t D(s)ds \right] n_0(x)e^{pt-d\int_0^t\rho ds}dx,
\end{align*}
for $p,h\in H^1(\mathcal{S})$.
We observe that the definition of $F'(p)h$ constitutes an ordinary differential equation for $\int_0^t D(s)ds$, which yields the explicit formula
\begin{align*}
	(F'(p)h)(t)=D(t)=\int_\RR h(x) n_0(x)\int_0^t \left(\int_0^s e^{pr}dr\right) e^{-d\int_0^s\rho(r)dr} dsdx.
\end{align*}
Using this formula, it is straightforward to obtain a formula for the adjoint operator $F'(p)^*\psi$, $\psi\in L^2(0,T)$, which is defined as the solution to
\begin{align*}
	-\Delta w + w &= n_0(x) \int_0^T \psi(t) \int_0^t \left(\int_0^s e^{p(x)r}dr\right) e^{-d\int_0^s\rho dr}ds dt\qquad \text{in }\mathcal{S},\\
	\partial_n w&=0\quad\text{on } \partial\mathcal{S}.
\end{align*}
It is easy to verify that for all $h,p\in H^1(\mathcal{S})$ and $\psi\in L^2(0,T)$ 
\begin{align*}
	(F'(p)h,\psi)_{L^2(0,T)} = (h, F'(p)^*\psi)_{H^1(\mathcal{S})}.
\end{align*}
Convergence rates for the error $\|p_\alpha^\delta- p^\dagger\|_{H^1(\mathcal{S})}$ follow from assuming a source condition \cite{EHN96}
\begin{align}\label{eq:source_condition}
	p^\dagger-p_0 = F'(p^\dagger)^*w
\end{align}
with sufficiently small $w\in L^2(0,T)$.
In order to approximate minimizers of the Tikhonov functional, we use the iteratively regularized Gauss-Newton (IRGN) method 
\begin{align*}
	p_{k+1}=p_k +  (F'(p_k)^*F'(p_k)+\alpha_k I)^{-1}\big(F'(p_k)^*(\rho^\delta-F(p_k)+\alpha_k (p_k-p_0)\big),
\end{align*}
where $\alpha_k=\max\{\alpha,1/2^k\}$; see \cite{BakKok04} for a convergence analysis if $\alpha=0$ and $\delta=0$. Let us refer to \cite{ES15} for a discussion on the use of the IRGN method to minimize \eqref{eq:Tik} with $\alpha>0$.

\paragraph{Numerical example}
We illustrate the performance of the IRGN method choosing the example $n_0(x)=\cos(\pi x/2)$, for $x\in\mathcal{S}=(-1,1)$, $p^\dagger(x)=e^x$, and $d(x)=1$. The final time is chosen as $T=1$. We choose a spatial grid with spacing $10^{-3}$ and temporal grid with spacing $10^{-2}$.
The initial guess $p_0$ is chosen such that it satisfies \eqref{eq:source_condition} with $w(t)=e^{-t}$. A reconstruction is shown in Figure~\ref{fig:rec_p_nonlinear} together with the convergence rate of the error $\|p_\alpha^\delta-p^\dagger\|_{H^1(\mathcal{S})}$, which exhibits the  rate $O(\sqrt{\delta})$ that is expected for Tikhonov regularization.
The good convergence behavior of the IRGN method can also be seen in Table~\ref{tab:rec_p_nonlinear}.
\begin{figure}
	\includegraphics[width=.48\textwidth]{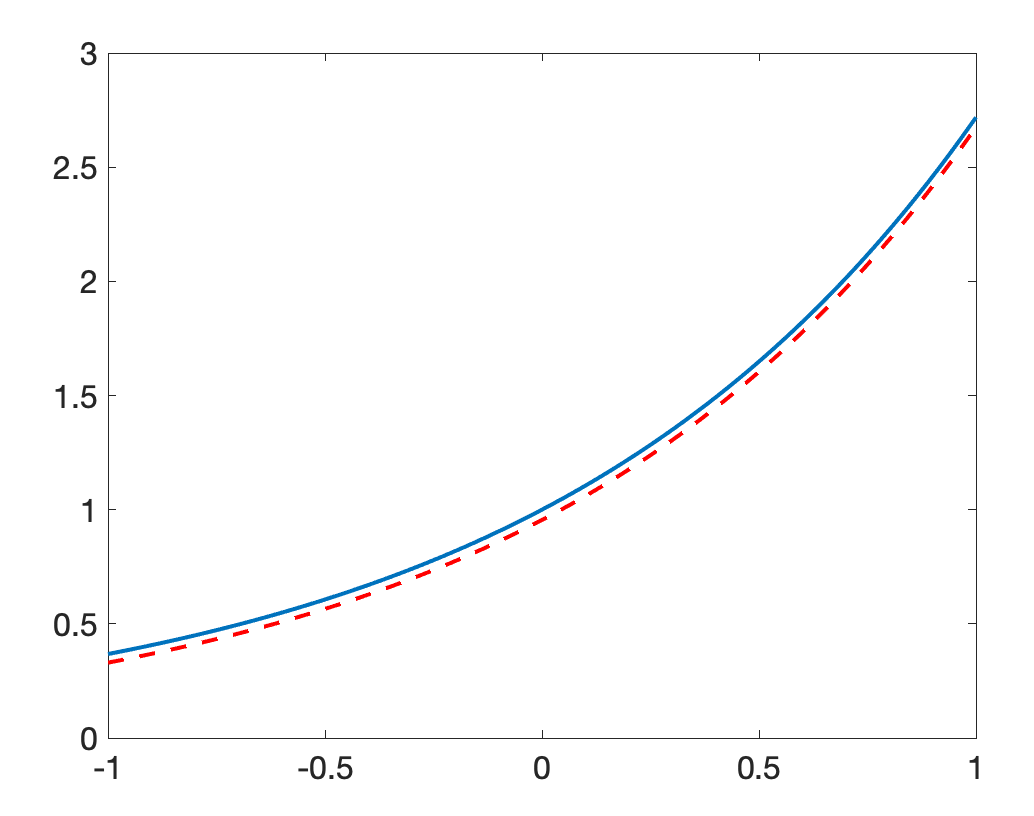}
	\includegraphics[width=.48\textwidth]{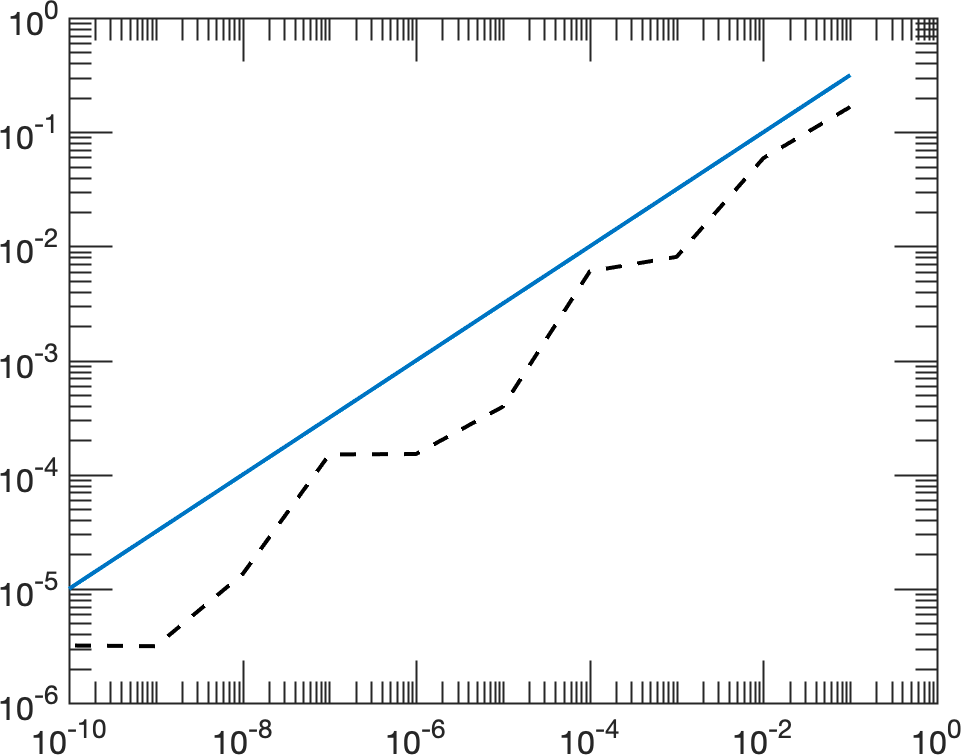}
	\caption{\label{fig:rec_p_nonlinear} Left: $p^\dagger$ (solid line) and corresponding reconstruction $p_\alpha^\delta$ for $\alpha=\delta=1.24\times 10^{-2}$ after $6$ IRGN iterations for minimizing \eqref{eq:Tik}. Right: A plot of the error $\|p_\alpha^\delta-p^\dagger\|_{H^1(\mathcal{S})}$ (dotted) and the curve $\sqrt{\delta}$ (solid) for different values of $\delta$.}
\end{figure}

\begin{table}
\caption{\label{tab:rec_p_nonlinear} Convergence behavior of the IRGN method for the minimization of \eqref{eq:Tik} for different noise levels $\delta$. The error convergence with $O(\sqrt{\delta})$, cf. Figure~\ref{fig:rec_p_nonlinear}.}
\centering
\begin{tabular}{c c c c}
\toprule
$\delta$	& $\|p_\alpha^\delta-p^\dagger\|_{H^1(\mathcal{S})}$ 	& $\|\rho^\delta-F(p_\alpha^\delta)\|_{L^2(0,T)}$	& \# iterations\\
 \midrule
$1.2\times 10^{-1}$		& $1.7\times 10^{-1}$	& $1.4\times 10^{-1}$		& 1	\\
$1.2\times 10^{-2}$		& $5.9\times 10^{-2}$	& $2.4\times 10^{-2}$		& 6\\
$1.2\times 10^{-3}$		& $8.1\times 10^{-3}$	& $2.2\times 10^{-3}$		& 10\\
$1.2\times 10^{-4}$		& $6.0\times 10^{-3}$	& $2.2\times 10^{-4}$		& 14\\
$1.2\times 10^{-5}$		& $3.9\times 10^{-4}$	& $2.1\times 10^{-5}$	& 17\\
$1.2\times 10^{-6}$		& $1.5\times 10^{-4}$	& $1.7\times 10^{-6}$		& 21\\
$1.2\times 10^{-7}$		& $1.5\times 10^{-4}$	& $2.2\times 10^{-7}$ 	&  24\\
$1.2\times 10^{-8}$		& $1.3\times 10^{-5}$	& $2.4\times 10^{-8}$ 	& 27\\ 
$1.2\times 10^{-9}$		& $3.1\times 10^{-6}$ 	& $1.7\times 10^{-9}$& 31\\
$1.2\times 10^{-10}$	& $3.2\times 10^{-6}$ 	& $2.1\times 10^{-10}$ & 34 \\
\bottomrule
\end{tabular}
\end{table}

\subsubsection{Perturbed forward operator}\label{sec:perturbed}
In order to reduce the nonlinearity of the inverse problem, let us present a second choice of forward operator.
Using the data $\rho^\delta$ into the right hand side of \eqref{eq:rho_fix}, we define a perturbed forward operator
\begin{align*}
	F^\delta(p) = \int_\R n_0(x) e^{tp(x)-d(x)\int_0^t\rho(s)^\delta ds}dx.
\end{align*}
 Similar as in the proof of Theorem~\ref{thm:existence} we obtain the following error estimate
\begin{align*}
	\|F^\delta(p)- F(p) \|_{L^2(0,T)} \leq \|n_0\|_{L^1}e^{T\|p\|_\infty} \|d\|_\infty T \|\rho^\delta-\rho\|_{L^2(0,T)}.
\end{align*}
As above, we assume that $n_0$ is compactly supported with support $\mathcal{S}$.
Thus, in view of standard results from the analysis of Tikhonov regularization \cite{EHN96}, we can obtain stable approximations by minimizing the following Tikhonov functional with perturbed forward operator
\begin{align}\label{eq:Tik2}
	\frac{1}{2} \|F^\delta(p)-\rho^\delta\|_{Y}^2 + \frac{\alpha}{2}\|p-p_0\|_{X}^2,
\end{align}
with $Y=L^2(0,T)$ and $X=H^1(\mathcal{S})$, $\mathcal{S}={\rm supp}(n_0)$ and $p_0\in X$.
For completeness, we provide the following result, which is a slight generalization of \cite[Thm 10.3]{EHN96}, see also \cite{Egger2015} for a corresponding result for linear problems.
\begin{lemma}
	Let $F:X\to Y$ be a continuous and weakly lower semi-continuous operator between Hilbert spaces $X$ and $Y$. Let $\delta>0$ and let $F^\delta:X\to Y$ be continuous and weakly lower-semicontinuous such that $\|F^\delta(p)-F(p)\|_Y\leq C(\|p\|_X)\delta$ for all $p\in X$ with a constant $C(\|p\|_{X})$ that depends continuously on $\|p\|_X$. Then, for $\rho,\rho^\delta\in Y$ with $\rho\in R(F)$ and $\|\rho-\rho^\delta\|_Y\leq \delta$, the minimizers $\{p_\alpha^\delta\}$ of \eqref{eq:Tik2} converge along subsequences to a $p_0$-minimum-norm solution of $F(p)=\rho$ with $\delta\to 0$ provided that $\alpha\to 0$ and $\delta^2/\alpha\to 0$. If the $p_0$-minimum-norm solution is unique, then the whole sequence converges to the unique $p_0$-minimum-norm solution
\end{lemma}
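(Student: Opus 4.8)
The plan is to mirror the standard convergence proof for Tikhonov regularization as in \cite[Thm 10.3]{EHN96}, but carefully track the perturbation of the forward operator, so that the regularized operator $F^\delta$ replaces $F$ in the discrepancy term while the exact equation $F(p)=\rho$ defines the limit. First I would fix a $p_0$-minimum-norm solution $p^\dagger$, i.e.\ a minimizer of $\|p-p_0\|_X$ over the solution set $\{p : F(p)=\rho\}$, which is nonempty since $\rho\in R(F)$. For a sequence $\delta_k\to 0$ with corresponding parameters $\alpha_k\to 0$ and $\delta_k^2/\alpha_k\to 0$, let $p_k:=p_{\alpha_k}^{\delta_k}$ denote the minimizers of \eqref{eq:Tik2}; existence of these minimizers for fixed $\delta,\alpha>0$ follows from the continuity and weak lower semicontinuity assumptions on $F^\delta$ together with the coercivity supplied by the regularization term, exactly as in the unperturbed case.

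The core estimate comes from comparing the functional value at $p_k$ with its value at $p^\dagger$. Testing the minimality of $p_k$ against $p^\dagger$ gives
\begin{align*}
  \tfrac12\|F^{\delta_k}(p_k)-\rho^{\delta_k}\|_Y^2 + \tfrac{\alpha_k}{2}\|p_k-p_0\|_X^2
  \le \tfrac12\|F^{\delta_k}(p^\dagger)-\rho^{\delta_k}\|_Y^2 + \tfrac{\alpha_k}{2}\|p^\dagger-p_0\|_X^2.
\end{align*}
The key is to bound the right-hand discrepancy: using $F(p^\dagger)=\rho$, the triangle inequality, the data bound $\|\rho-\rho^{\delta_k}\|_Y\le\delta_k$, and the perturbation bound $\|F^{\delta_k}(p^\dagger)-F(p^\dagger)\|_Y\le C(\|p^\dagger\|_X)\delta_k$, I obtain $\|F^{\delta_k}(p^\dagger)-\rho^{\delta_k}\|_Y \le (1+C(\|p^\dagger\|_X))\delta_k$. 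Hence the right-hand side is of order $\delta_k^2 + \alpha_k$. Dividing the resulting inequality by $\alpha_k$ and using $\delta_k^2/\alpha_k\to 0$ shows that $\limsup_k\|p_k-p_0\|_X \le \|p^\dagger-p_0\|_X$, so $\{p_k\}$ is bounded in $X$; it also shows directly that the discrepancy $\|F^{\delta_k}(p_k)-\rho^{\delta_k}\|_Y^2 = \bigO(\delta_k^2+\alpha_k)\to 0$.

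By boundedness and reflexivity of the Hilbert space $X$, I extract a weakly convergent subsequence $p_{k_j}\weak \bar p$. The remaining task is to show $F(\bar p)=\rho$ and that $\bar p$ is the $p_0$-minimum-norm solution. For the former, I would split $F(p_{k_j})-\rho = (F(p_{k_j})-F^{\delta_{k_j}}(p_{k_j})) + (F^{\delta_{k_j}}(p_{k_j})-\rho^{\delta_{k_j}}) + (\rho^{\delta_{k_j}}-\rho)$; the first term is controlled by $C(\|p_{k_j}\|_X)\delta_{k_j}\to 0$ (using the uniform bound on the norms so that $C$ stays bounded), the second is the discrepancy just shown to vanish, and the third is at most $\delta_{k_j}$. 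Thus $F(p_{k_j})\to\rho$ strongly, and weak lower semicontinuity of $p\mapsto\|F(p)-\rho\|_Y$ (which follows from continuity and convexity of the norm, combined with the assumed weak lower semicontinuity of $F$) forces $F(\bar p)=\rho$. For minimum-norm optimality, weak lower semicontinuity of the norm gives $\|\bar p-p_0\|_X \le \liminf_j\|p_{k_j}-p_0\|_X \le \limsup_k\|p_k-p_0\|_X \le \|p^\dagger-p_0\|_X$, and since $\bar p$ solves $F(\bar p)=\rho$, it is a $p_0$-minimum-norm solution. If that solution is unique, every subsequence has a further subsequence converging weakly to the same limit, giving convergence of the whole sequence.

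The main obstacle I anticipate is the careful handling of the operator perturbation inside the weak-limit argument: the constant $C(\|p\|_X)$ depends on the iterates, so I must first establish the uniform bound on $\|p_k\|_X$ before I can claim $C(\|p_{k_j}\|_X)\delta_{k_j}\to 0$, and the logical order of these steps matters. A secondary subtlety is upgrading weak to strong convergence of $\{p_k-p_0\}$ if a stronger conclusion is desired; the inequality $\limsup_k\|p_k-p_0\|_X\le\|p^\dagger-p_0\|_X$ together with weak convergence and the radial uniform convexity of the Hilbert-space norm yields norm convergence, hence strong convergence, but this refinement is not strictly needed for the stated claim and I would mention it only in passing.
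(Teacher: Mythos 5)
Your proposal is correct and takes essentially the same route as the paper's proof: the same minimality comparison with $p^\dagger$ using the perturbation bound $\|F^{\delta}(p^\dagger)-\rho^{\delta}\|_Y\leq (1+C(\|p^\dagger\|_X))\delta$, establishing uniform boundedness of the minimizers before invoking continuity of $C(\cdot)$, and the same triangle-inequality split plus weak lower semicontinuity to conclude $F(\bar p)=\rho$ for the weak subsequential limit. The only difference is that you write out the minimum-norm and whole-sequence convergence steps that the paper delegates to \cite[Thm.~10.3]{EHN96}, which is a matter of detail rather than method.
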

\begin{proof}
	The proof is similar to \cite[Thm. 10.3]{EHN96}, and we give only the steps that are different. Let $p^\dagger\in X$ be a $p_0$-minimum-norm solution. Since $\{p_\alpha^\delta\}$ minimize \eqref{eq:Tik2}, we have that
	\begin{align*}
		\frac{1}{2} \|F^\delta(p_\alpha^\delta)-\rho^\delta\|_{Y}^2 + \frac{\alpha}{2}\|p_\alpha^\delta-p_0\|_{X}^2
		&\leq \frac{1}{2} \|F^\delta(p^\dagger)-\rho^\delta\|_{L^2(0,T)}^2 + \frac{\alpha}{2}\|p^\dagger-p_0\|_{X}^2\\
		&\leq 2 C(\|p^\dagger\|_X)^2\delta^2+ \frac{\alpha}{2}\|p^\dagger-p_0\|_{X}^2,
	\end{align*}
	which implies boundedness $\{p_\alpha^\delta\}$ and weak convergence of a subsequence $\{p_{\alpha_k}^{\delta_k}\}$ to $p\in X$. Moreover, we have that
	\begin{align*}
		\|F^\delta(p_\alpha^\delta)-\rho^\delta\|_Y^2\leq 4 C(\|p^\dagger\|_X)^2\delta^2+\alpha\|p^\dagger-p_0\|^2_X.
	\end{align*}
	 By weak lower-semicontinuity of $F$ and using the latter inequality, we obtain that
	\begin{align*}
		\|F(p)-\rho\|_Y&\leq \limsup_k \|F(p_{\alpha_k}^{\delta_k})-\rho^{\delta_k}\|_Y\leq \limsup_k \|F^{\delta_k}(p_{\alpha_k}^{\delta_k})-F(p_{\alpha_k}^{\delta_k})\|_Y + \|F^{\delta_k}(p_{\alpha_k}^{\delta_k})-\rho^{\delta_k}\|_Y\\
		&\leq \limsup_k C(\|p_{\alpha_k}^{\delta_k}\|_X)\delta_k + C\delta_k+C\alpha_k\|p^\dagger-p_0\|_X=0,
	\end{align*}
	where we used continuity of the constant $C(\|p_{\alpha_k}^{\delta_k}\|_X)$ and boundedness of $\{p_{\alpha_k}^{\delta_k}\}$. Thus, $F(p)=\rho$. Proceeding as in the proof of \cite[Thm. 10.3]{EHN96}, we hence obtain the assertion.
\end{proof}

As before, $F^\delta$ is Fr\'echet differentiable with derivative
\begin{align*}
	dF^\delta(p)h = \int_\mathcal{S} h(x) n_0(x) t e^{pt-d\int_0^t\rho^\delta ds} dx,\quad h\in H^1(\mathcal{S}),
\end{align*}
and the adjoint $dF^\delta(p)^*\psi$, $\psi\in L^2(0,T)$, is defined as the solution to
\begin{align*}
	-\Delta w + w &= n_0(x) \int_0^T t\psi(t) e^{pt-d\int_0^t\rho^\delta ds}dt\qquad \text{in }\mathcal{S},\\
	\partial_n w&=0\quad\text{on } \partial\mathcal{S}.
\end{align*}
The Tikhonov functional \eqref{eq:Tik2} can then be minimized as above by the IRGN method, which we consider next.

\paragraph{Numerical Example}
We consider the same example and setup as in the previous section. We observe, that using the perturbed forward operator yields essentially the same results as using the fully nonlinear forward operator. However, the numerical implementation of the perturbed forward operator is simpler. Figure~\ref{fig:rec_p} shows an exemplary reconstruction together with the exact solution and the convergence behaviour of the error $\|p_\alpha^\delta-p^\dagger\|_{H^1(\mathcal{S})}$ for different values of $\delta$. Table~\ref{tab:rec_p} shows, in addition, the convergence of the residuals for different values of $\delta$ and the required IRGN iterations to obtain a suitable reconstruction.
\begin{figure}
	\includegraphics[width=.48\textwidth]{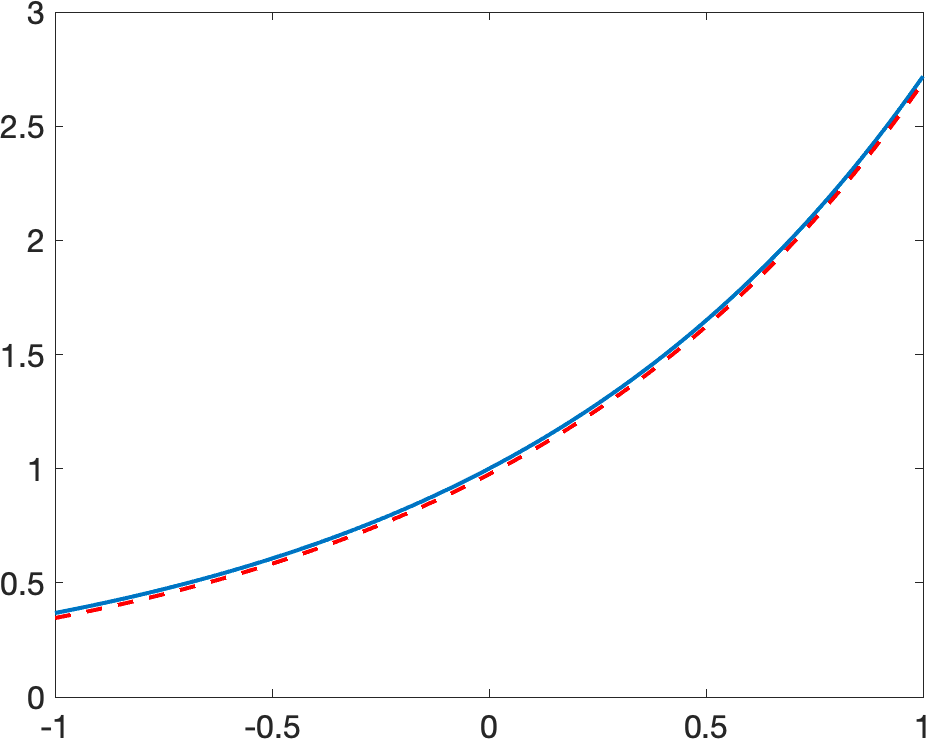}
	\includegraphics[width=.48\textwidth]{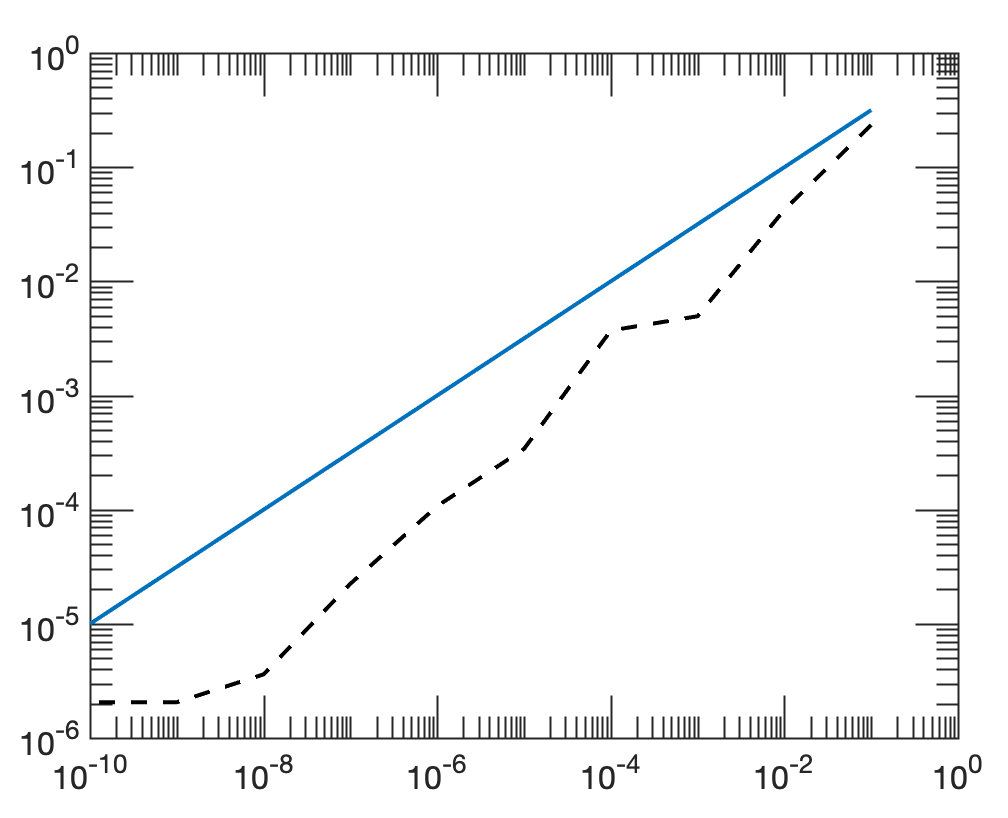}
	\caption{\label{fig:rec_p} Left: $p^\dagger$ (solid line) and corresponding reconstruction $p_\alpha^\delta$ for $\alpha=\delta=1.24\times 10^{-2}$ after $5$ IRGN iterations for minimizing \eqref{eq:Tik2}. Right: A plot of the corresponding errors $\|p_\alpha^\delta-p^\dagger\|_{H^1(\mathcal{S})}$ (dotted) and the curve $\sqrt{\delta}$ (solid) for different values of $\delta$.}
\end{figure}
\begin{table}
\caption{\label{tab:rec_p} Convergence behavior of the IRGN method for the minimization of \eqref{eq:Tik2} for different noise levels $\delta$. The error convergence with $O(\sqrt{\delta})$, cf. Figure~\ref{fig:rec_p}.}
\centering
\begin{tabular}{c c c c}
\toprule
$\delta$	& $\|p_\alpha^\delta-p^\dagger\|_{H^1(\mathcal{S})}$ 	& $\|\rho^\delta-F(p_\alpha^\delta)\|_{L^2(0,T)}$	& \# iterations\\
 \midrule
$2.5\times 10^{-1}$		& $2.3\times 10^{-1}$	& $2.9\times 10^{-1}$		& 1	\\
$2.5\times 10^{-2}$		& $4.2\times 10^{-2}$	& $4.0\times 10^{-2}$		& 5\\
$2.5\times 10^{-3}$		& $4.9\times 10^{-3}$	& $3.4\times 10^{-3}$		& 9\\
$2.5\times 10^{-4}$		& $3.7\times 10^{-3}$	& $4.3\times 10^{-4}$		& 12\\
$2.5\times 10^{-5}$		& $3.4\times 10^{-4}$	& $4.7\times 10^{-5}$	& 15\\
$2.5\times 10^{-6}$		& $1.1\times 10^{-4}$	& $3.4\times 10^{-6}$		& 19\\
$2.5\times 10^{-7}$		& $2.2\times 10^{-5}$	& $3.8\times 10^{-7}$ 	&  22\\
$2.5\times 10^{-8}$		& $3.6\times 10^{-6}$	& $4.4\times 10^{-8}$ 	& 25\\ 
$2.5\times 10^{-9}$		& $2.1\times 10^{-6}$ 	& $3.4\times 10^{-9}$& 29\\
$2.5\times 10^{-10}$	& $2.1\times 10^{-6}$ 	& $4.2\times 10^{-10}$ & 32 \\
\bottomrule
\end{tabular}
\end{table}
%
%

\subsection{Reconstructions using critical points of the population density}
We illustrate the reconstruction formulas given in Theorem~\ref{thm:recon_p_d_prime} by numerical examples. Contrary to Theorem~\ref{thm:ident_p}, Theorem~\ref{thm:recon_p_d_prime} does not require monotonicity of the parameter functions.

\paragraph{Reconstruction of $p'$ from critical points of $n$}
As an initial datum we choose $n_0(x)=\cos(\pi x/2)$, $d(x)=1$ and $p(x)=1+\sin(x)^2$ and we let $x\in (-1,1)$ and $t\in [0,10]$.
For our numerical computations, we discretize $x$ equidistantly with grid spacing $10^{-4}$. Similarly, we discretize time with time step size $10^{-2}$. In our numerical algorithms, given an approximation of $n(t,x)$ we thus compute $\rho(t)$ and $\int_0^t\rho(s)ds$ approximately using quadrature rules. Using these approximations, we compute an approximation of $n$ at the next time instance using \eqref{eq:explicit} with $\int_0^t\rho(s)ds$ replaced by its numerical approximation.

To apply Theorem~\ref{thm:recon_p_d_prime}, we collect the minima and maxima of the approximate population density over time as our data $\{(t_i,\bar x_i)\}$; cf. Figure~\ref{fig:p_prime_data} for snapshots of the approximation of $n(t,x)$ for $t\in \{2,6,9\}$. Since $n_0'(x)p'(x)\leq0$ for all $x\in (-1,1)$, all $x\in (-1,1)$ will eventually be critical points. The point $x=0$ is a critical point for all times, while each $x\neq 0$ is a critical point of $n(t,\cdot)$ exactly for one $t>0$, see Section~\ref{sec:critical}. In Figure~\ref{fig:p_prime_rec} the corresponding reconstruction $p'_r$ of $p'$ is shown.
As predicted by Theorem~\ref{thm:recon_p_d_prime}, we observe excellent agreement of the reconstruction with $p'$, which is to be expected for highly resolved approximation.

If we add $2.5\%$ of uniformly distributed noise to the location of the critical points, i.e., the data is changed to $\{(t_i,\bar x_i(1+\delta\eta))\}$ with $\eta\sim U(-1/2,1/2)$ and $\delta=0.05$, the reconstructions deteriorate, but only in a minor fashion, see Figure~\ref{fig:p_prime_rec}. In fact, employing the smoothness of the initial datum the influence of noise can be quantified by Taylor expansion. For sufficiently small noise, we obtain a linear rate of convergence in $\delta$ of the reconstruction error
\begin{align*}
	\sup_{i} |p'(\bar x_i)-p_{r}'(\bar x_i (1+\delta\eta))|,
\end{align*}
showing well-posedness of the reconstruction problem if the initial data and its derivative are available.
The saturation for small noise is due to the errors in the numerical approximation, and it can be overcome by using a finer discretization to generate the simulated data.
\begin{figure}\centering
	\includegraphics[width=.32\textwidth]{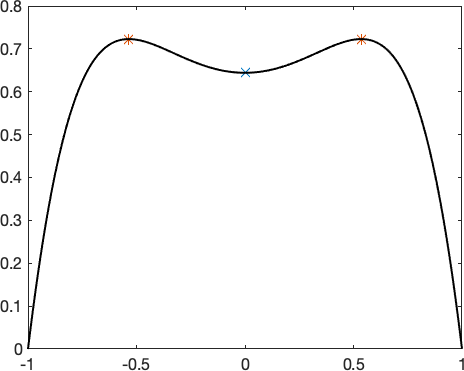}
	\includegraphics[width=.32\textwidth]{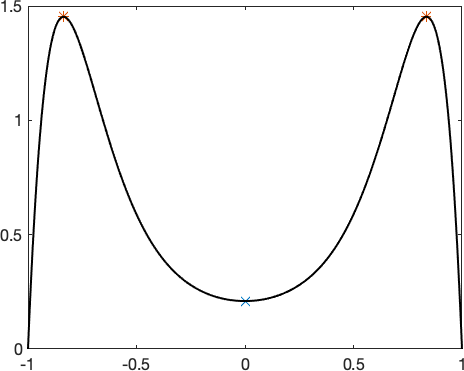}
	\includegraphics[width=.32\textwidth]{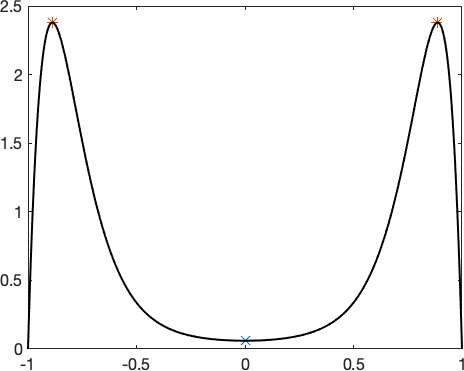}
\caption{\label{fig:p_prime_data} Snapshots of the numerical approximation of $n(t,x)$ for $t\in\{2,6,9\}$ for the reconstruction of $p'$ (from left to right). The markers denote the corresponding critical points that are used in the reconstruction formula.}
\end{figure}
\begin{figure}\centering
	\includegraphics[width=.32\textwidth]{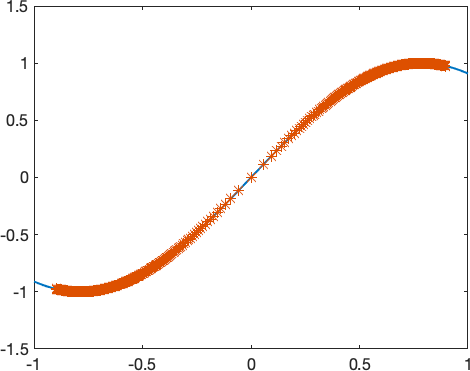}
	\includegraphics[width=.32\textwidth]{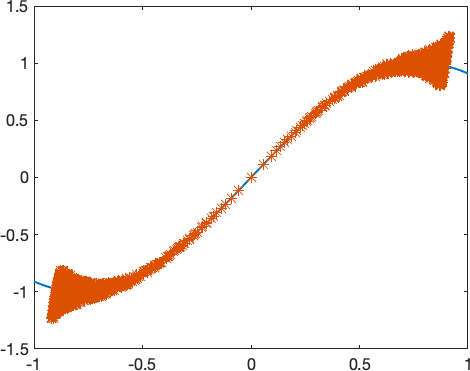}
	\includegraphics[width=.32\textwidth]{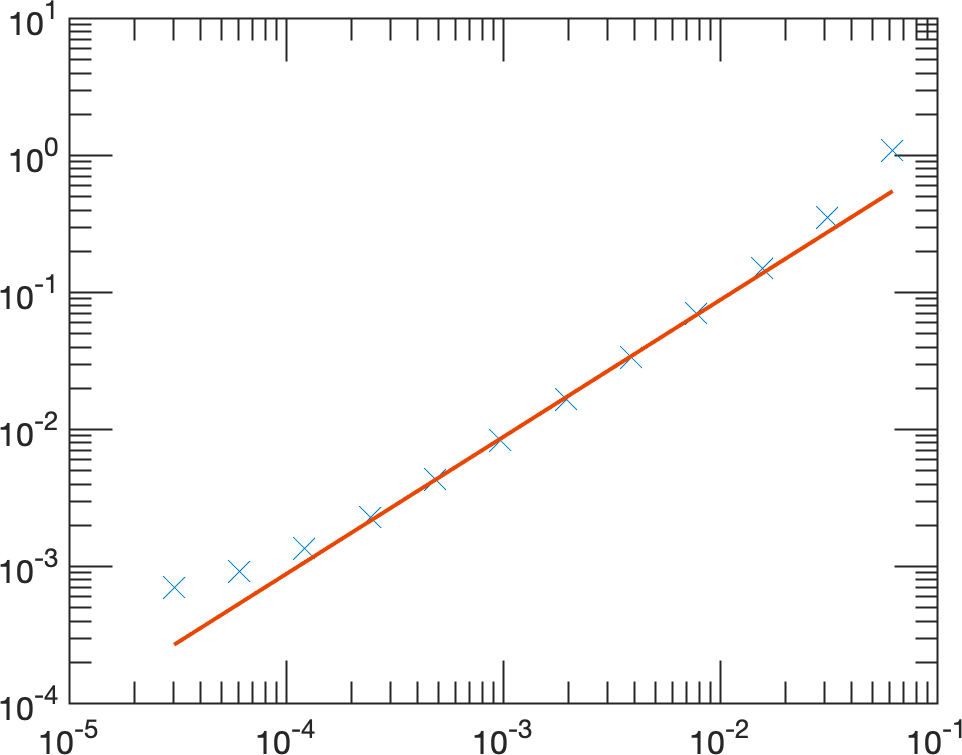}
\caption{\label{fig:p_prime_rec} Numerical reconstructions of $p'$ (red crosses) and the exact (unknown) function $p'$ (solid blue line) are shown. Left for critical points that are located within the accuracy of the numerical scheme; middle critical points with $2.5\%$ of uniform random noise. Right: Convergence rates for different noise levels $\delta=1/2^i$ for $i=4,\ldots, 15$ (crosses), the solid curve is proportional to $\delta$.}
\end{figure}
\paragraph{Reconstruction of $d'$ from critical points of $n$}
The setting is similar to the previous example. The difference is in that we choose $p(x)=1$, $d(x)=1-x^2$, and simulate until $T=3$. A similar discussion as for the previous example applies. In particular, since $d'(x)n'_0(x)>0$, all $x\in (-1,1)$ will eventually be critical points, see Section~\ref{sec:critical}. Recording the critical values of the population density and the total population allows for the reconstruction of the derivative of the unknown parameter $d$ if the initial datum is given.
Adding relative noise to the critical points will deteriorate the reconstruction only slightly; again showing well-posedness of the reconstruction problem.

\begin{figure}\centering
	\includegraphics[width=.32\textwidth]{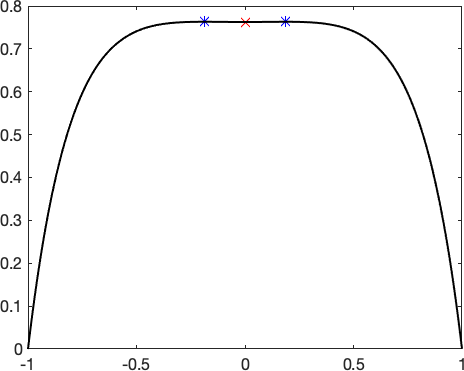}
	\includegraphics[width=.32\textwidth]{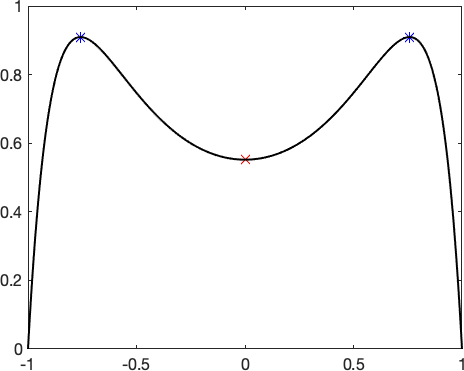}
	\includegraphics[width=.32\textwidth]{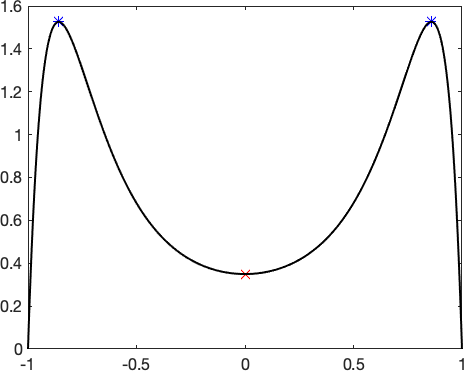}
\caption{\label{fig:d_prime_data} Snapshots of the numerical approximation of $n(t,x)$ for $t\in\{1,2,3\}$ for the reconstruction of $d'$ (from left to right). The markers denote the corresponding critical points that are used in the reconstruction formula.}
\end{figure}
\begin{figure}\centering
	\includegraphics[width=.32\textwidth]{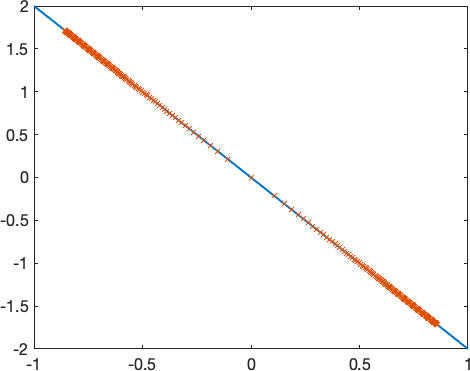}
	\includegraphics[width=.32\textwidth]{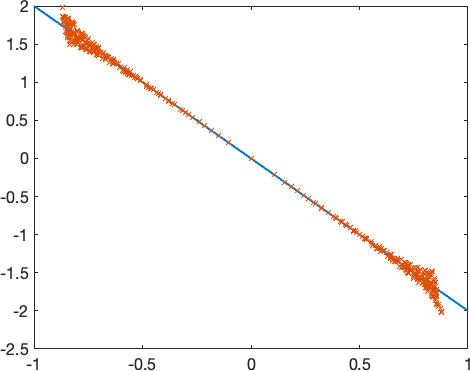}
	\includegraphics[width=.32\textwidth]{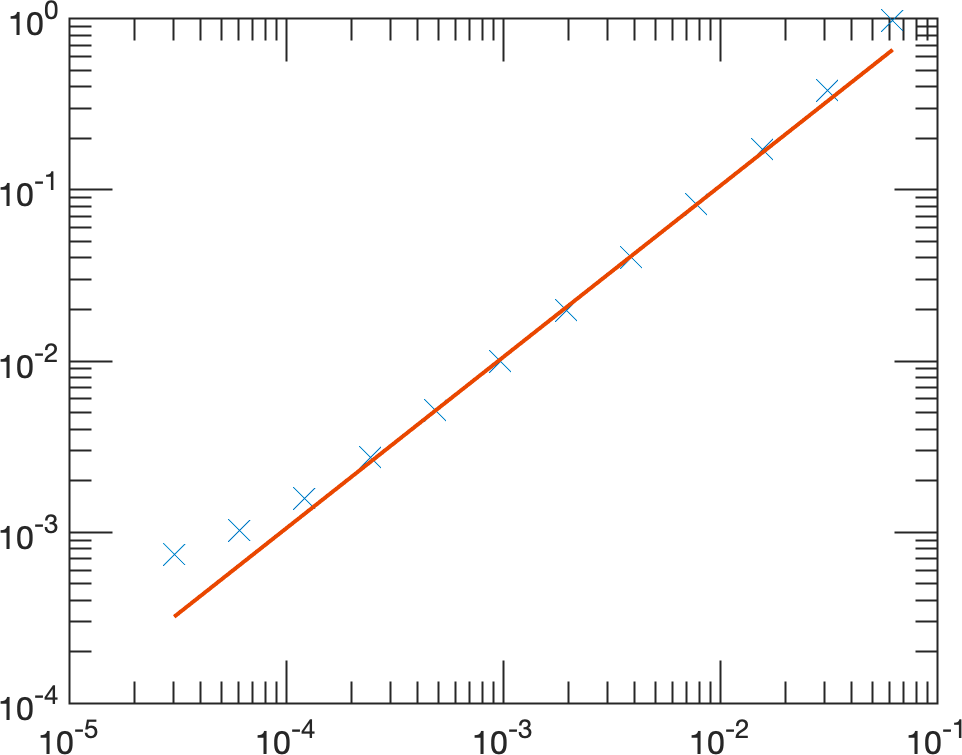}
\caption{\label{fig:d_prime_rec} Numerical reconstructions of $d'$ (red crosses) and the exact (unknown) function $d'$ (solid blue line) are shown. Left for critical points that are located within the accuracy of the numerical scheme; middle critical points with $2.5\%$ of uniform random noise. Right: Convergence rates for different noise levels $\delta=1/2^i$ for $i=4,\ldots, 15$ (crosses), the solid curve is proportional to $\delta$.}
\end{figure}

\section{Conclusions and outlook}\label{sec:outlook}
We considered several inverse problems for a nonlinear structured population model, whose dynamics is governed by a nonlocal averaging process. 
More precisely, we investigated the reconstruction of model parameters given either to total population size or the critical points of the population density. We demonstrated that in both cases the model possesses several symmetries that that leave the measurements invariant, showing the limited information content of total population size or critical points as only measurements. Ruling out these situations by appropriate assumptions on the unknown quantities, we were however able to obtain uniqueness results and in some cases explicit reconstruction formulas as well.\\
In order to model local interactions due to (small) mutations, the following generalization in the form of a parabolic system has been derived in \cite{Champagnat2006}:
 \begin{align*}
   \partial_t n(t,x) - \Delta n(t,x)  &= [p(x) - \int d(x,y)n(t,y)\;dy]n(t,x),\\ 
  n(0,x) &= n_0(x),
\end{align*}
where $d(x,y)$ allows to model more general competition behaviour. In this case, we are dealing with a second order parabolic equation and the explicit formular \eqref{eq:explicit} is no longer available. This different methods have to be applied yet we expect that some of our results can be extended to this case e.g. by using the heat kernel to obtain a fixed point equation for $\rho$. In particular, in such a setup using a perturbed forward operator as in Section~\ref{sec:perturbed} will yield a significant speed up in numerical computations. 
The investigation of such a model is, however, out of the scope of this paper and is left for future study.

\section*{Acknowledgements}
JFP acknowledges support by the German Science Foundation DFG via EXC 1003 Cells in Motion Cluster of Excellence, M{\"u}nster. The authors would like to thank Barbara Kaltenbacher (Klagenfurt) for stimulating discussions.

\bibliographystyle{plain}
\bibliography{popinv}

\end{document}